\newtheorem{lemma}{Lemma}[section]
\newtheorem{theorem}[lemma]{Theorem}
\newtheorem{proposition}[lemma]{Proposition}
\newtheorem{remark}[lemma]{Remark}
\newcommand{\N}{\ifmmode{{\Bbb N}}\else{\mbox{${\Bbb N}$}}\fi}
\newcommand{\C}{\ifmmode{{\Bbb C}}\else{\mbox{${\Bbb C}$}}\fi}
\newcommand{\R}{\ifmmode{{\Bbb R}}\else{\mbox{${\Bbb R}$}}\fi}
\newcommand{\cal}{\mathcal}
\title{Exponential stability to the Bresse system with boundary dissipation
conditions}
\author{M. S. Alves} 
\address{Department of Mathematics.
Vi\c{c}osa University.  CEP 36570-000. Vi\c{c}osa. MG-Brazil.}
\email{{\tt malves@ufv.br}}
\author{Octavio Vera} 
\address{Department of Mathematics, Universidad del B{\'i}o-B{\'i}o, Av. Collao 1202, Casilla 5-C,
Concepci{\'o}n,Chile.} 
\email{{\tt overa@ubiobio.cl,\ octaviovera49@gmail.com} }
\author{Jaime Mu{\~n}oz Rivera}
\address{LNCC, Av. Getulio Vargas 333. Quintadinha. CEP 25651-075. Petr\'opilis.RJ.Brasil. Inst. de Matem\'atica, UFRJ, Av. da Silva Ramos. CEP 21945-970.RJ. Brasil}
\email{{\tt rivera@lncc.br, \ rivera@im.ufrj.br}}
\author{Amelie Rambaud}
\address{Department of Mathematics, grupo de investigaci\'on GIMNAP 151408/VC, Universidad del B{\'i}o-B{\'i}o, Av. Collao 1202, Casilla 5-C,
Concepci{\'o}n, Chile.} 
\email{{\tt arambaud@ubiobio.cl}}
\date{}
\begin{document}

\maketitle

\begin{abstract}
\noindent We consider the Bresse model with three control boundary
conditions. We prove the exponential stability of the system
 using the semigroup theory of linear operators and a
result obtained by Pr\"{u}ss \cite{z3TF84}.
\end{abstract}

\noindent \underline{Keywords}: Bresse system, boundary
dissipation, exponential stability. \\
\noindent Mathematics Subject Classification 2010: 93D15

\renewcommand{\theequation}{\thesection.\arabic{equation}}
\setcounter{equation}{0}
\section{Introduction}
In this work, we study  the stabilization of a problem arising from engineering motivation, the so-called  circular arch problem also known as the
Bresse system (see \cite{lagnese}) which is given by
\begin{eqnarray}
\label{eq1.1}
\begin{array}{ll}
\rho_{1}\,\varphi_{tt} - \kappa\,\left(\varphi_{x} + \psi +
\ell\,w\right)_{x} - k_{0}\,\ell\,(w_{x} - \ell\,\varphi)=0, &
\quad\mbox{in}\quad (0,\,L)\times (0,\,+\infty),
\\
\noalign{\medskip} \rho_{2}\,\psi_{tt} - b\,\psi_{xx} + \kappa\,
\left(\varphi_{x} + \psi + \ell\,w\right)=0 & \quad\mbox{in}\quad
(0,\,L)\times (0,\,+\infty),
\\
\noalign{\medskip}\rho_{1}\,w_{tt} - k_{0}\,(w_{x} -
\ell\,\varphi)_{x} + \kappa\,\ell\,\left(\varphi_{x} + \psi +
\ell\,w\right) = 0 & \quad\mbox{in}\quad (0,\,L)\times
(0,\,+\infty),
\end{array}
\end{eqnarray}
where $L$ is the length of the beam, $\rho_{1}=\rho\,A,$ $\rho_{2} =
\rho\,I$, $\kappa=\kappa'\,G\,A$, $\kappa_{0}=E\,A$, $b=E\,T$,
$\ell=R^{-1},$ $\rho$ is the density of the material, $E$ is the
modulus of elasticity, $G$ is the shear modulus, $\kappa'$ is the
shear factor, $A$ is the cross-sectional area, $I$  is the second
moment of area of the cross-section and $R$ is the radius of
curvature. The functions $w,\;\varphi$ and $\psi$ are the
longitudinal, vertical and shear angle displacements, respectively.

On of the main issues, both from a mathematical and physical point of view is the question of stability in long time ($t\rightarrow \infty$), in order to prevent the problem from infinite vibrations.  This question has been studied by many authors. We refer to the book of Liu and Zheng \cite{Liu} for a general survey on this topic. 

Concerning the Bresse system above, few results about the asymptotic behavior exist. Let us briefly review the different kinds of stabilization that have been conducted. An important problem in
the Bresse system is to find a minimum dissipation by which the
solution decays uniformly to zero in time. In this direction we have
the paper of Fatori and Rivera \cite{Lucy}, which improved the paper
by Liu and Rao \cite{Rao}, and more recently the article \cite{najda-whebe14}, where  the polynomial decay rate of the energy is improved. In these papers, the authors show that, in general, the Bresse
system is not exponentially stable but that there exists polynomial
stability with rates that depend on the wave propagations and the
regularity of the initial data. Moreover, they introduced a
necessary condition for the dissipative semigroup to decay polynomially.
This result allowed them to show some optimality to the polynomial
rate of decay. The Bresse system with frictional damping was
considered by  Alabau-Boussouira {\it et al.} \cite{dilberto1}. In
that paper the authors showed that the Bresse system is
exponentially stable if and only if the velocities of waves
propagations are the same. Also, they showed that when the
velocities are not the same, the system is not exponentially stable,
and they proved that the solution in this case goes to zero
polynomially, with rates that can be improved by taking more regular
initial data. This rate of polynomial decay was improved by Fatori
and Monteiro \cite{Monteiro}. The indefinite damping acting on the
shear angle displacement was considered by Palomino {\it et al.}
\cite{Juan}. In \cite{2479028} Noun and Wehbe extended the results
of Alabau-Boussouira {\it et al.} \cite{dilberto1} and considered
the important case when the dissipation law is locally distributed.
Finally, Lima {\it et al.} \cite{Mauro} considered the Bresse system
with past history acting in the shear angle displacement. They show
the exponential decay of the solution if and only if the wave speeds
are the same. If not, they show that the Bresse system is
polynomial stable with optimal decay rate.

In the present work, we attack  the delicate problem where the dissipative effect (some how the control we may have on the system)  takes place at the boundary. However, the consideration of only one dissipative effect, or only two, seems to be difficult to treat.  Let us mention some known results related to the boundary
stabilization of the Timoshenko beam. Kim and Renardy in
\cite{MR912448} proved the exponential stability of the  system
under two boundary controls. In \cite{t35M107}, Ammar-Khodja and his
co-authors studied the decay rate of the energy of the nonuniform
Timoshenko beam with two boundary controls acting in the
rotation-angle equation. In \cite{3303915}, Bassam and his
co-authors studied the indirect boundary stabilization of the
Timoshenko  system with only one dissipation law.

As a first step towards the stability of such systems with one control at the boundary, we consider in the present article the following boundary conditions that complement system \eqref{eq1.1}:
\begin{eqnarray}
\label{eq1.2}
\begin{array}{ll}
\varphi(L,\,t)=0, \quad \psi(L,\,t)=0, \quad w(L,\,t)=0 &
\quad\mbox{in}\quad(0,\,+\infty),
\\
\noalign{\medskip}
\kappa\,(\varphi_{x} + \psi + \ell\,w)(0,\,t) =
\gamma_{1}\,\varphi_{t}(0,\,t), & \quad\mbox{in}\quad(0,\,+\infty), \\
\noalign{\medskip}
b\,\psi_x(0,\,t) = \gamma_{2}\,\psi_{t}(0,\,t), &
\quad\mbox{in}\quad (0,\,+\infty, \\
\noalign{\medskip} k_{0}\,(w_{x} -
\ell\,\varphi)(0,\,t)=\gamma_{3}\,w_{t}(0,\,t), &
\quad\mbox{in}\quad (0,\,+\infty),
\end{array}
\end{eqnarray}
where $\gamma_{j}>0,\ j=1,\,2,\,3.$ In other words, we investigate three dissipative effects at the boundary.  The system is finally completed with  initial conditions
\begin{eqnarray}
\label{eq1.3}
\begin{array}{ll}
& \varphi(x,\,0)=\varphi_{0}(x), \quad
\varphi_{t}(x,\,0)=\varphi_{1}(x), \quad\mbox{in}\quad (0,\,L), \\
& \psi(x,\,0)=\psi_{0}(x),\quad\psi_{t}(x,\,0)=\psi_{1}(x),\quad
\mbox{in}\quad
(0,\,L),  \\
& w(x,\,0)=w_{0}(x), \quad w_{t}(x,\,0)=w_{1}(x),\quad
\mbox{in}\quad (0,\,L).
\end{array}
\end{eqnarray}
Let us define the energy functional associated to the system:  for $(\varphi,\,\psi,\,w)$ a regular solution to  \eqref{eq1.1}-\eqref{eq1.3}, its associated total energy is defined
by
\begin{eqnarray*}
\mathcal{E}(t)=\frac{1}{2}\int_{0}^{L}
\left(\rho_{1}\,|\varphi_{t}|^{2} + \rho_{2}\,|\psi_{t}|^{2} +
\rho_{1}\,|w_{t}|^{2} + \kappa\,|\varphi_{x} + \psi + \ell\,w|^{2} +
b\,|\psi_{x}|^{2} + k_{0}\,|w_{x} - \ell\,\varphi|^{2}\right)dx.
\end{eqnarray*}
Then a straightforward computation gives
\begin{eqnarray*}
\frac{d}{dt} \mathcal{E}(t)=-\ \gamma_{1} \,|\varphi_{t}(0)|^{2} -
\gamma_{2}\,|\psi_{t}(0)|^{2} - \gamma_{3}\,|w_{t}(0)|^{2} \leq 0,
\end{eqnarray*}
consequently the system \eqref{eq1.1}-\eqref{eq1.3} is dissipative
in the sense that the energy is non-increasing.
\begin{remark}
We observe that if $R \rightarrow \infty$  then $\ell\rightarrow 0$
and this model reduces to the well-know Timoshenko beam equations
(see \cite{graff} and \cite{lagnese} for details).
\end{remark}

 The main result of this paper is to prove that the exponential
stability of the system \eqref{eq1.1}--\eqref{eq1.3} holds. As far as
the authors know, there have been no contributions made in this
sense. Our main tools are semigroup techniques \cite{Pazy}, a result
by Pr\"{u}ss \cite{z3TF84} as well as spectral arguments.

The remaining part of this paper is organized as follows. Section 2
outlines briefly the notations and well-posedness of the system. In section 3, we show the exponential stability of the
corresponding semigroup. Through this paper, $C$ is a generic
constant, not necessarily the same at each occasion (it will change
line to line), which depends in an increasing way on the indicated
quantities.

\renewcommand{\theequation}{\thesection.\arabic{equation}}
\setcounter{equation}{0}
\section{Existence and uniqueness}\label{sec-C_0_semigroup}
%================================================================
The aim of this section is to prove the existence and uniqueness of
solutions for the problem \eqref{eq1.1}-\eqref{eq1.3}.
%----------------------------------------------------------------
%\subsection{Setting and notation}
%----------------------------------------------------------------

Given a Banach space $X$, let $\|\cdot\|_X$ be the usual norm
defined on $X$. In particular, we denote by $\langle \cdot,\,\cdot
\rangle$ and $\|\cdot\|$ the inner product and the norm defined on
$L^{2}(0,\,L)$, respectively. Before stating the existence and the
uniqueness result of problem \eqref{eq1.1}-\eqref{eq1.3}, we first
set-up the following short-hand notation for the function space
\begin{eqnarray*}
H_{L}^{1}(0,\,L)=\left\{\phi \in H^{1}(0,\,L):\quad
\phi(L)=0\right\}.
\end{eqnarray*}
Putting $\Phi=\varphi_{t}$ and
$\Psi=\psi_{t}$, the phase space of our problem is
\begin{eqnarray*}
\mathcal{H}=[H_{L}^{1}(0,\,L)]^{3}\times [L^{2}(0,\,L)]^{3},
\end{eqnarray*}
normed by
\begin{eqnarray*}
\|(\varphi,\,\psi,\,w,\,\Phi,\,\Psi,\,W)\|_{\mathcal{H}}^{2} & = &
\kappa\,\|\varphi_{x} + \psi + \ell\,w\|^{2} +
\rho_{1}\,\|\Phi\|^{2} + b\,\|\psi_{x}\|^{2} +
\rho_{2}\,\|\Psi\|^{2} + \rho_{1}\,\|W\|^{2} \\
&  & +\ k_{0}\,\|w_{x} - \ell\,\varphi\|^{2}.
\end{eqnarray*}
We denote by $C^{T}$ the transpose of a matrix $C$ and introducing
the state vector
\begin{eqnarray*}
U(t)=\left(\varphi(t),\,\psi(t),\,w(t),\,\Phi(t),\,\Psi(t),\,W(t)\right)^{T},
\end{eqnarray*}
system \eqref{eq1.1}-\eqref{eq1.2} can be written as a linear ordinary
differential equation in $\mathcal{H}$ of the form
\begin{equation}
\label{ODE} \frac{d}{dt}U(t)=\mathcal{A}\,U(t),
\end{equation}
where the domain $\mathcal{D}(\mathcal{A})$ of the linear operator
$\mathcal{A}:\mathcal{D}(\mathcal{A})\subseteq\mathcal{H} \to
\mathcal{H}$ is given by
\begin{eqnarray*}
\mathcal{D}(\mathcal{A}) & = & \left\{ U\in
\mathcal{H}:\quad\varphi,\,\psi,\,w\in H^{2}(0,\,L),\quad
\Phi,\,\Psi,\,W\in H_{L}^{1}(0,\,L),
\right.\\
&  & \quad\left. \kappa\,(\varphi_{x} + \psi +
\ell\,w)(0)=\gamma_{1}\,\Phi(0),\quad
b\,\psi_x(0)=\gamma_2\,\Phi(0),\; \right.
 \\
&  & \quad\left. \kappa_{0}\,(w_{x} -
\ell\,\varphi)(0)=\gamma_{3}\,W(0) \right\}
\end{eqnarray*}
and
\begin{eqnarray*}
\mathcal{A}U=\left (\begin{matrix} \Phi \\ \noalign{\medskip} \Psi
\\\noalign{\medskip}
W\\
\noalign{\medskip}
\displaystyle\frac{\kappa}{\rho_{1}}\left(\varphi_{x} + \psi +
\ell\,w\right)_{x} + \frac{k_{0}\,\ell}{\rho_{1}}\,(w_{x} - \ell\,w)
\\
\noalign{\medskip}
\displaystyle\frac{b}{\rho_{2}}\,\psi_{xx} -
\frac{\kappa}{\rho_{2}}\left(\varphi_{x} + \psi + \ell\,w\right)\\
\\
\noalign{\medskip} \displaystyle\frac{k_{0}}{\rho_{1}}\,(w_{x} -
\ell\,\varphi)_{x} - \frac{\kappa\,\ell}{\rho_{1}}\left(\varphi_{x}
+ \psi + \ell\,w\right)
\end{matrix}\right).
\end{eqnarray*}

\begin{proposition}\label{prop:1}
The operator $\mathcal{A}$ is the infinitesimal generator of a
contraction semigroup $\{ \mathcal{S}_{\mathcal{A}}(t)\}_{t\geq0}$.
\end{proposition}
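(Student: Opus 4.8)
The plan is to apply the Lumer--Phillips theorem, which reduces the claim to two verifications: that $\mathcal{A}$ is dissipative on $\mathcal{H}$, and that $\lambda I - \mathcal{A}$ is surjective for some (equivalently every) $\lambda > 0$. Throughout I work with the energy inner product inducing the norm $\|\cdot\|_{\mathcal{H}}$ given above, so dissipativity is nothing but the differential energy identity already recorded in the introduction, now read at the operator level.

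\textbf{Dissipativity.} First I would compute $\mathrm{Re}\,\langle \mathcal{A}U,\,U\rangle_{\mathcal{H}}$ for $U \in \mathcal{D}(\mathcal{A})$. Writing out the six-component inner product, the plan is to integrate by parts in the three ``momentum'' terms (the ones carrying $\varphi_{xx}$, $\psi_{xx}$, $w_{xx}$ through the combinations $(\varphi_x+\psi+\ell w)_x$, $\psi_{xx}$, $(w_x-\ell\varphi)_x$). All interior terms should cancel against the matching contributions from the displacement components, exactly mirroring the cancellations that produce the clean form of $\tfrac{d}{dt}\mathcal{E}(t)$. The surviving boundary terms at $x=0$ are then rewritten using the three boundary relations in $\mathcal{D}(\mathcal{A})$: the factor $\kappa(\varphi_x+\psi+\ell w)(0)$ becomes $\gamma_1\Phi(0)$, and so on. This leaves
\begin{eqnarray*}
\mathrm{Re}\,\langle \mathcal{A}U,\,U\rangle_{\mathcal{H}} = -\,\gamma_{1}\,|\Phi(0)|^{2} - \gamma_{2}\,|\Psi(0)|^{2} - \gamma_{3}\,|W(0)|^{2} \le 0,
\end{eqnarray*}
since each $\gamma_j>0$; the boundary terms at $x=L$ vanish because every field in $H^1_L$ dies there. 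This establishes dissipativity.

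\textbf{Surjectivity.} Next I would show $0 \in \rho(\mathcal{A})$, or more safely solve $(\lambda I - \mathcal{A})U = F$ for a fixed $\lambda>0$ and arbitrary $F\in\mathcal{H}$. From the first three rows I solve for $\Phi,\Psi,W$ in terms of $\varphi,\psi,w$ and the data (e.g. $\Phi=\lambda\varphi-f_1$), and substituting into the last three rows reduces the problem to an elliptic system of three coupled second-order ODEs on $(0,L)$ for $(\varphi,\psi,w)$, with Dirichlet conditions at $x=L$ and Robin-type conditions at $x=0$ inherited from $\mathcal{D}(\mathcal{A})$. The natural tool is the Lax--Milgram theorem on $[H^1_L(0,L)]^3$: I would set up the bilinear form by multiplying the reduced system by a test triple, integrate by parts so the boundary conditions at $x=0$ feed into the form with the correct $\gamma_j\lambda$ coefficients, and verify coercivity. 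Coercivity should follow because the principal part reproduces the energy norm (the $\kappa\|\cdot\|^2+b\|\psi_x\|^2+k_0\|\cdot\|^2$ structure) while the $\lambda$-terms control the lower-order $L^2$ masses; the boundary contributions are sign-definite by positivity of the $\gamma_j$. Elliptic regularity then upgrades the weak solution to $H^2$, confirming $U\in\mathcal{D}(\mathcal{A})$.

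\textbf{Main obstacle.} The dissipativity computation is essentially forced once the energy identity is in hand. The delicate step is the coercivity estimate for the Lax--Milgram argument: the zeroth-order coupling terms $\ell\,w$ and $-\ell\,\varphi$ mix the three equations, so one must confirm that the quadratic form $\kappa\|\varphi_x+\psi+\ell w\|^2+b\|\psi_x\|^2+k_0\|w_x-\ell\varphi\|^2$ together with the $L^2$ masses genuinely dominates the full $[H^1]^3$ norm rather than degenerating along some direction in the kernel of these combinations. I would handle this by combining these three squared quantities with the Dirichlet condition at $L$ through a Poincar\'e inequality to recover control of $\|\varphi_x\|$, $\|w_x\|$ and hence the full norms; the cross terms are absorbed by Young's inequality, choosing $\lambda$ large if necessary. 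Once coercivity is secured, Lumer--Phillips delivers the contraction semigroup $\{\mathcal{S}_{\mathcal{A}}(t)\}_{t\ge0}$.
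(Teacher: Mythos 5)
Your proposal is correct and follows essentially the same route as the paper: Lumer--Phillips, with dissipativity computed from the boundary conditions in $\mathcal{D}(\mathcal{A})$ giving $\operatorname{Re}\langle\mathcal{A}U,U\rangle_{\mathcal{H}}=-\gamma_1|\Phi(0)|^2-\gamma_2|\Psi(0)|^2-\gamma_3|W(0)|^2$, and surjectivity via Lax--Milgram on $[H^1_L(0,L)]^3$ (the paper solves $\mathcal{A}U=F$ at $\lambda=0$ and concludes $0\in\varrho(\mathcal{A})$, rather than treating $(\lambda I-\mathcal{A})U=F$ with $\lambda>0$ as you propose). Your explicit attention to the coercivity of the sesquilinear form --- that the energy quadratic form has trivial kernel on $[H^1_L(0,L)]^3$ and controls the full $H^1$ norm --- addresses a point the paper simply asserts.
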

\begin{proof}
The operator  $\mathcal{A}$ is dissipative. Indeed, for every $U \in
\mathcal{D}(\mathcal{A})$, it is not difficult to see that
\begin{equation}\label{diss}
\operatorname{Re}\langle \mathcal{A} U,\,U\rangle_{\mathcal{H}}= -\
\gamma_{1}\,|\Phi(0)|^{2} - \gamma_{2}\,|\Psi(0)|^{2} - \gamma_{3}\,
|W(0)|^{2}\leq 0.
\end{equation}
Moreover, the domain $\mathcal{D}$ of $\mathcal{A}$ is clearly dense in the Hilbert $\mathcal{H}$ and the operator is closed. Finally, for all
$F=(f_{1},\,f_{2},\,f_{3},\,f_{4},\,f_{5},\,f_{6})$ there exists a
unique $U=(\varphi,\,\psi,\,w,\,\Phi,\,\Psi,\,W)\in
\mathcal{D}(\mathcal{A}) $ such that $\mathcal{A}U=F$ (that is to say, that is solution to the resolvent system of the operator). Indeed, the system reads, in terms of components:
\begin{align}
& \Phi = f_{1},\label{uni1} \\
\noalign{\medskip}
& \Psi = f_{2}, \label{uni2}\\
\noalign{\medskip}
&  W = f_{3}, \label{uni3}\\
\noalign{\medskip} & \kappa \left(\varphi_{x} + \psi +
\ell\,w\right)_{x} + \kappa_{0}\, \ell\,(w_{x} -
\ell\,\varphi)=\rho_{1}\,f_{4},\label{uni4}
\\
\noalign{\medskip}
& b\,\psi_{xx} - \kappa\,\left(\varphi_{x} + \psi + \ell\,w\right) =
\rho_{2}\,f_{5}, \label{uni5} \\
\noalign{\medskip} & \kappa_{0}\,(w_{x} - \ell\,\varphi)_{x} -
\kappa\,\ell\left(\varphi_{x} + \psi + \ell\,w\right)=
\rho_{1}\,f_{6}. \label{uni6}
\end{align}
From \eqref{uni1}-\eqref{uni3} we know $\Phi,\;\Psi$ and $W$. To
show the existence and uniqueness of $(\varphi,\,\psi,\,w)$
satisfying \eqref{uni4}-\eqref{uni6} we  consider the continuous and
coercive sesquilinear form
\begin{eqnarray*}\mathbb{B}((\varphi,\,\psi,\,w),\,(u,\,v,\,p))
& = & \kappa\int_{0}^{L}(\varphi_{x} + \psi +
\ell\,w)\,(\overline{u_{x} + v + \ell\,p})\ dx +
b\int_{0}^{L}\psi_{x}\,\overline{v}_{x}\ dx
 \\
&  & +\ \kappa_{0}\int_{0}^{L}(w_{x} -
\ell\,\varphi)\,(\overline{p_{x} - \ell\,u})\ dx,
\end{eqnarray*}
for $(\varphi,\,\psi,\,w),\,(u,\,v,\,p)$ belong to $
[H_{L}^{1}(0,\,L)]^{3}$ and the continuous sesquilinear function
\begin{eqnarray*}
\mathbb{F}(u,\,v,\,p) & = & \rho_{1}\int_{0}^{L}f_{4}\,\overline{u}\
dx + \rho_{2}\int_{0}^{L}f_{5}\,\overline{v}\ dx +
\rho_{1}\int_{0}^{L}f_{6}\,\overline{p}\ dx + \gamma_{1}\,
f_{1}(0)\,\overline{u}(0)  \\
&  & +\ \gamma_{2}\,f_{2}(0)\,\overline{v}(0) + \gamma_{3}\,
f_{3}(0)\,\overline{p}(0).
\end{eqnarray*}
By the Lax-Milgram Theorem there exists a unique
$(\varphi,\,\psi,\,w)$  in  $ [H_{L}(0,\,L)^{1}]^{3}$ such that
\begin{eqnarray*}
\mathbb{B}((\varphi,\,\psi,\,w),\,(u,\,v,\,p))=\mathbb{F}(u,\,v,\,p),\quad\forall
\;(u,\,v,\,p)\in [H_{L}^{1}(0,\,L)]^{3}
\end{eqnarray*}
Hence  $ 0 \in \varrho( \mathcal{A})$, and the conclusion of Poposition \ref{prop:1} 
follows from the Lumer-Phillips Theorem (see for example \cite{Pazy}).

\end{proof}
As a direct consequence of Proposition \ref{prop:1}, we claim: 
\begin{theorem}
Given
$U_{0}=\left(\varphi_{0},\,\psi_{0},\,w_{0},\,\varphi_{1},\,\psi_{1},\,
w_{1}\right) \in \mathcal{H}$ there exists a unique solution $
U(t)={\cal
S}(t)U_{0}=\left(\varphi(t),\,\psi(t),\,w(t),\,\varphi_{t}(t),\,\psi_{t}(t),
\,w_t(t)\right)$ to \eqref{ODE} such that
\begin{equation*}
U\in C(0,\infty; \mathcal{H}).
\end{equation*}
If moreover, $\,\mathbf{U}_{0}\in  \mathcal{D} (\mathcal{A})$, then
\begin{equation*}
 U\in C^{1}([0,\,\infty[:\, \mathcal{H})\cap
C\big([0,\,\infty[:\,\mathcal{D} (\mathcal{A})\big). 
\end{equation*}
\end{theorem}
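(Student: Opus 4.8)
The plan is to read this statement as a direct corollary of Proposition \ref{prop:1}, invoking the standard generation theory of linear semigroups as presented in \cite{Pazy}. The substantive work has already been carried out: Proposition \ref{prop:1} establishes that $\mathcal{A}$ is the infinitesimal generator of a $C_0$-semigroup of contractions $\{\mathcal{S}_{\mathcal{A}}(t)\}_{t\geq 0}$ on $\mathcal{H}$, by verifying dissipativity through \eqref{diss} and surjectivity of $\lambda I-\mathcal{A}$ (indeed $0\in\varrho(\mathcal{A})$) through the Lax--Milgram argument, and then applying Lumer--Phillips. Once this is in hand, the Cauchy problem \eqref{ODE} is exactly the abstract evolution equation governed by a semigroup generator, and the theorem is the textbook dichotomy between mild and classical solutions.

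First I would treat the case $U_0\in\mathcal{H}$. The plan is to define $U(t)=\mathcal{S}_{\mathcal{A}}(t)U_0$ and observe that, by strong continuity of the semigroup, the map $t\mapsto U(t)$ lies in $C([0,\infty);\mathcal{H})$. This is the unique mild solution of \eqref{ODE} with $U(0)=U_0$; uniqueness is inherited from the uniqueness built into the Hille--Yosida / Lumer--Phillips framework, since any two solutions with the same datum must coincide with the orbit of the generated semigroup. No further estimate is needed here beyond what Proposition \ref{prop:1} provides.

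Second I would treat the regularity improvement for $U_0\in\mathcal{D}(\mathcal{A})$. The classical theorem on generators (see \cite{Pazy}) states that when the initial datum lies in the domain, the orbit $t\mapsto\mathcal{S}_{\mathcal{A}}(t)U_0$ remains in $\mathcal{D}(\mathcal{A})$ for every $t\geq 0$, is continuously differentiable as an $\mathcal{H}$-valued map, and satisfies $\frac{d}{dt}U(t)=\mathcal{A}\,U(t)=\mathcal{S}_{\mathcal{A}}(t)\,\mathcal{A}\,U_0$ in the strong sense. From this one reads off simultaneously that $U\in C^{1}([0,\infty);\mathcal{H})$ and that $U\in C([0,\infty);\mathcal{D}(\mathcal{A}))$, the latter with $\mathcal{D}(\mathcal{A})$ endowed with its graph norm, since $t\mapsto \mathcal{A}U(t)$ is continuous into $\mathcal{H}$.

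The honest assessment is that this statement presents essentially no obstacle: the difficulty of the well-posedness problem resides entirely in Proposition \ref{prop:1}, and the theorem is a routine application of semigroup theory. The only points requiring mild care are stating that the continuity into $\mathcal{D}(\mathcal{A})$ is meant with respect to the graph norm, so that the claim is well posed, and correctly matching the two regularity classes to the two hypotheses on $U_0$. I would therefore keep the write-up short, citing \cite{Pazy} for the generation-to-solution correspondence rather than reproving it.
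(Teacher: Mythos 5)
Your proposal is correct and follows exactly the paper's route: the paper gives no separate proof, simply presenting the theorem as a direct consequence of Proposition \ref{prop:1} via the standard generator-to-solution correspondence of semigroup theory in \cite{Pazy}. Your additional remarks on mild versus classical solutions and the graph norm on $\mathcal{D}(\mathcal{A})$ only make explicit what the paper leaves implicit.
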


\renewcommand{\theequation}{\thesection.\arabic{equation}}
\setcounter{equation}{0}
\section{Exponential stability}\label{sec-expostab}

The main goal of this section is to prove the exponential decay of
solutions. Our main tool is the well known result
(see \cite{z3TF84}):
\begin{theorem}\label{pruss}
Let ${\cal S}(t)=e^{{\mathcal{A}}t}$ be a $C_{0}$-semigroup of
contractions on Hilbert space $\mathcal{H}.$ Then ${\cal S}(t)$ is
exponentially stable if and only if $i\,\R\subset \rho\,({\mathcal
A})$ and
\begin{equation}\label{estimativa}
\overline{\lim}_{\!\!\!\!\!\!\!\!\!\!\!\!\!{}_{{}_{{|\lambda
|\rightarrow \infty}}}}\|(i\,\lambda\,I - {\mathcal
A})^{-1}\|_{{\mathcal L}(\mathcal H)} < \infty.
\end{equation}
\end{theorem}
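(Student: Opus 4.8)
The statement is the Gearhart--Pr\"uss characterization of exponential stability on a Hilbert space, so the plan is to treat the two implications separately, the forward one being routine and the converse carrying all the difficulty. For necessity, if ${\cal S}(t)=e^{\mathcal{A}t}$ decays like $Me^{-\omega t}$ then the growth bound is negative, so the closed half-plane $\{\operatorname{Re}\mu\ge 0\}$ lies in $\rho(\mathcal{A})$ and the Laplace representation $(\mu I-\mathcal{A})^{-1}=\int_0^\infty e^{-\mu t}{\cal S}(t)\,dt$ gives, on the imaginary axis, $\|(i\lambda I-\mathcal{A})^{-1}\|\le\int_0^\infty Me^{-\omega t}\,dt=M/\omega$, a bound independent of $\lambda$; this yields \eqref{estimativa} at once and shows $i\,\R\subset\rho(\mathcal{A})$.

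For sufficiency I would reduce everything to an orbit estimate via Datko's theorem (see \cite{Pazy}): a contraction semigroup on $\mathcal{H}$ is exponentially stable as soon as $\int_0^\infty\|{\cal S}(t)x\|^2\,dt<\infty$ for every $x\in\mathcal{H}$, the uniform bound $\int_0^\infty\|{\cal S}(t)x\|^2\,dt\le C\|x\|^2$ being then automatic by the uniform boundedness principle. The first step is to upgrade the hypothesis: writing $M:=\sup_{\lambda\in\R}\|(i\lambda I-\mathcal{A})^{-1}\|<\infty$, a Neumann series built on the resolvent identity shows that the whole strip $\{-\tfrac{1}{2M}<\operatorname{Re}\mu\le 0\}$ sits in $\rho(\mathcal{A})$ with $\|(\mu I-\mathcal{A})^{-1}\|\le 2M$ there. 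Fixing $\delta\in(0,\tfrac{1}{2M})$ and passing to the shifted generator $\mathcal{B}=\mathcal{A}+\delta I$, which generates $e^{\delta t}{\cal S}(t)$, this strip bound together with the contraction estimate $\|(\mu I-\mathcal{A})^{-1}\|\le 1/\operatorname{Re}\mu$ far from the axis gives $K:=\sup_{\operatorname{Re}\mu\ge 0}\|(\mu I-\mathcal{B})^{-1}\|<\infty$.

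The heart of the argument is a Plancherel computation, and the one real obstacle is that the uniform resolvent bound $K$ alone does not make $\lambda\mapsto(i\lambda I-\mathcal{B})^{-1}x$ square-integrable, so a naive application would be circular. I would break the circularity using the extra decay available on the domain: from $(\mu I-\mathcal{B})^{-1}x=\mu^{-1}\big(x+(\mu I-\mathcal{B})^{-1}\mathcal{B}x\big)$ one gets, for $x\in\mathcal{D}(\mathcal{B})$ and every $\operatorname{Re}\mu\ge 0$, the estimate $\|(\mu I-\mathcal{B})^{-1}x\|\le\min\{K\|x\|,\ |\lambda|^{-1}(\|x\|+K\|\mathcal{B}x\|)\}$, a bound independent of $\operatorname{Re}\mu$ that lies in $L^2(\R,d\lambda)$. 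The vector-valued Plancherel theorem applied to $t\mapsto e^{-\varepsilon t}e^{\delta t}{\cal S}(t)x$ then yields $\int_0^\infty e^{-2(\varepsilon-\delta)t}\|{\cal S}(t)x\|^2\,dt=\tfrac{1}{2\pi}\int_{\R}\|((\varepsilon+i\lambda)I-\mathcal{B})^{-1}x\|^2\,d\lambda\le C_x$ uniformly for $\varepsilon>\delta$; letting $\varepsilon\downarrow\delta$ by monotone convergence gives $\int_0^\infty\|{\cal S}(t)x\|^2\,dt<\infty$ first for $x\in\mathcal{D}(\mathcal{B})=\mathcal{D}(\mathcal{A})$ and then, by density, for all $x\in\mathcal{H}$, after which Datko's theorem closes the proof. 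I expect the delicate points to be exactly the passage from a pointwise resolvent bound on the line to an $L^2$ bound in $\lambda$, handled by the domain decay above, and the justification of the Laplace representation and the Plancherel identity in the range $\operatorname{Re}\mu>\omega_0$, which is precisely why the preliminary shift to $\mathcal{B}$ is needed.
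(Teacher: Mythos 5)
The paper does not actually prove Theorem \ref{pruss}; it is imported verbatim from Pr\"uss \cite{z3TF84}, so there is no internal proof to compare yours against and I can only judge the proposal on its own terms. Most of it is sound and is indeed the standard route: the necessity direction via the Laplace representation is correct; the Neumann series does place the strip $\{-\tfrac{1}{2M}<\operatorname{Re}\mu\le 0\}$ in $\rho(\mathcal{A})$ with resolvent bound $2M$; the shift to $\mathcal{B}=\mathcal{A}+\delta I$ and the bound $K=\sup_{\operatorname{Re}\mu\ge0}\|(\mu I-\mathcal{B})^{-1}\|<\infty$ are fine; and the Plancherel identity for $t\mapsto e^{-(\varepsilon-\delta)t}\mathcal{S}(t)x$ with $\varepsilon>\delta$ is legitimate since that function lies in $L^1\cap L^2(0,\infty;\mathcal{H})$.

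The gap is the final step, ``first for $x\in\mathcal{D}(\mathcal{A})$ and then, by density, for all $x\in\mathcal{H}$.'' Integrating your min-bound gives $\int_0^\infty\|\mathcal{S}(t)x\|^2\,dt\le \tfrac{2}{\pi}K\|x\|\bigl(\|x\|+K\|\mathcal{B}x\|\bigr)$, a constant that is \emph{not} controlled by $\|x\|$ alone, so density transfers nothing: if $x_n\to x$ with $\sup_n\|\mathcal{B}x_n\|<\infty$ then $x\in\mathcal{D}(\mathcal{B})$ already (closedness plus weak compactness), and for $x\notin\mathcal{D}(\mathcal{B})$ every approximating sequence has $C_{x_n}\to\infty$. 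Nor is the implicit principle ``square-integrable orbits on the dense subspace $\mathcal{D}(\mathcal{A})$ plus contractivity imply exponential stability'' true: on $\ell^2$ the diagonal generator $\mathcal{A}e_n=(-\tfrac1n+in)e_n$ yields a contraction semigroup with $\int_0^\infty\|\mathcal{S}(t)x\|^2\,dt=\sum_n\tfrac n2|x_n|^2<\infty$ for every $x\in\mathcal{D}(\mathcal{A})$, yet $\|\mathcal{S}(t)\|\equiv1$. The repair is to discard the domain trick entirely: for $\varepsilon>\delta$, Plancherel itself gives $\tfrac1{2\pi}\int_{\R}\|((\varepsilon+i\lambda)I-\mathcal{B})^{-1}x\|^2\,d\lambda=\int_0^\infty e^{-2(\varepsilon-\delta)t}\|\mathcal{S}(t)x\|^2\,dt\le\tfrac{\|x\|^2}{2(\varepsilon-\delta)}$ for \emph{every} $x\in\mathcal{H}$, and the resolvent identity $((\varepsilon'+i\lambda)I-\mathcal{B})^{-1}=\bigl(I+(\varepsilon-\varepsilon')((\varepsilon'+i\lambda)I-\mathcal{B})^{-1}\bigr)((\varepsilon+i\lambda)I-\mathcal{B})^{-1}$ transports this $L^2_\lambda$ bound, at the cost of a factor $(1+\varepsilon K)^2$, down to any line $\operatorname{Re}\mu=\varepsilon'$ with $\delta<\varepsilon'<\varepsilon$; monotone convergence as $\varepsilon'\downarrow\delta$ then yields $\int_0^\infty\|\mathcal{S}(t)x\|^2\,dt\le C\|x\|^2$ for all $x\in\mathcal{H}$, and Datko's theorem (or simply $t\|\mathcal{S}(t)x\|^2\le\int_0^t\|\mathcal{S}(s)x\|^2\,ds$ for a contraction semigroup) finishes. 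With that substitution your architecture becomes a complete and standard proof of the Gearhart--Pr\"uss theorem.
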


Therefore we will need to study the resolvent equation $
(i\,\lambda\,I - \mathcal{A}){U}={F}$, for $\lambda \ \in \ \mathbb{R}$, namely
\begin{align}
& i\,\lambda\,\varphi - \Phi = f_{1},\label{ris1} \\
 \noalign{\medskip}
& i\,\lambda\,\psi - \Psi = f_{2}, \label{ris2} \\
 \noalign{\medskip}
& i\,\lambda\,w - W = f_{3}, \label{ris3} \\
 \noalign{\medskip}
& i\,\lambda\,\rho_{1}\,\Phi - \kappa\, \left(\varphi_{x} + \psi +
\ell\,w\right)_{x} - \kappa_{0}\,\ell\,(w_{x} - \ell\,\varphi) =
\rho_{1}\,f_{4},\label{ris4}
\\
 \noalign{\medskip}
& i\,\lambda\,\rho_{2}\,\Psi - b\,\psi_{xx} +
\kappa\,\left(\varphi_{x} + \psi + \ell\,w\right)= \rho_{2}\,f_{5}, \label{ris5} \\
 \noalign{\medskip}
& i\,\lambda\,\rho_{1}\,W - \kappa_{0}\,(w_{x} - \ell\,\varphi)_{x}
+ \kappa\,\ell\,\left(\varphi_{x} + \psi + \ell\,w\right)=
\rho_{1}\,f_{6}, \label{ris6}
\end{align}
where $F=(f_{1},\,f_{2},\,f_{3},\,f_{4},\,f_{5},\,f_{6})^{T} \in
\mathcal{H}$. Taking inner product in $\mathcal{H}$ with $U$ and
using \eqref{diss} we get
\begin{equation}
\label{1estim} \left| \operatorname{Re}\langle \mathcal{A}
U,\,U\rangle_{\mathcal{H}}\right| \leq \|U\|_{\mathcal{H}}\,
\|F\|_{\mathcal{H}}.
\end{equation}
This implies that
\begin{equation}\label{Psi0}
|\Phi(0)|^{2} + |\Psi(0)|^{2} + |W(0)|^{2} \leq C\,
\|U\|_{\mathcal{H}}\,\|F\|_{\mathcal{H}},
\end{equation}
and, applying \eqref{ris1}-\eqref{ris3}, we obtain
\begin{equation}\label{psi_0}
|\varphi(0)|^{2} + |\psi(0)|^{2} + |w(0)|^{2} \leq
\frac{C}{|\lambda|^{2}}\,\|U\|_{\mathcal{H}}\,\|F\|_{\mathcal{H}} +
\frac{C}{|\lambda|^{2}}\,\|F\|_{\mathcal{H}}^{2}.
\end{equation}
Moreover, since
\begin{equation*}
|\varphi_{x}(0) + \psi(0) + \ell\,w(0)|^{2} + |\psi_{x}(0)|^{2} +
|w_{x}(0) - \ell\,\varphi(0)|^{2} \leq C\,\|U\|_{\mathcal{H}}\,
\|F\|_{\mathcal{H}},
\end{equation*}
it follows that
\begin{equation}\label{psix1}
|\varphi_{x}(0)|^{2} + |\psi_{x}(0)|^{2} + |w_{x}(0)|^{2} \leq C
\|U\|_{\mathcal{H}}\,\|F\|_{\mathcal{H}} + \frac{C}{|\lambda|^{2}}\,
\|U\|_{\mathcal{H}}\,\|F\|_{\mathcal{H}} + \frac{C}{|\lambda|^{2}}\,
\|F\|_{\mathcal{H}}^{2}.
\end{equation}
We will now establish a couple of lemmas in order to prove our stability result.

\begin{lemma}
\label{resolvente} The imaginary axis $i\,\R$ is contained in the
resolvent set $\rho\,({\mathcal A})$.
\end{lemma}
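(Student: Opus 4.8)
The plan is to combine a compactness argument with a direct spectral (eigenvalue) analysis. First I would show that $\mathcal{A}$ has compact resolvent. Every $U\in\mathcal{D}(\mathcal{A})$ has its first three components in $[H^{2}(0,L)]^{3}$ and its last three in $[H_{L}^{1}(0,L)]^{3}$, whereas $\mathcal{H}=[H_{L}^{1}(0,L)]^{3}\times[L^{2}(0,L)]^{3}$; hence, by the Rellich--Kondrachov theorem, $\mathcal{D}(\mathcal{A})$ endowed with the graph norm embeds compactly into $\mathcal{H}$. Since $0\in\varrho(\mathcal{A})$ by Proposition \ref{prop:1}, the operator $\mathcal{A}^{-1}$ is bounded from $\mathcal{H}$ into $\mathcal{D}(\mathcal{A})$, so composing with the compact embedding shows that $\mathcal{A}^{-1}$ is compact on $\mathcal{H}$. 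Consequently the spectrum of $\mathcal{A}$ is discrete, consisting only of isolated eigenvalues of finite multiplicity, and it suffices to prove that no point $i\lambda$ with $\lambda\in\R\setminus\{0\}$ is an eigenvalue; the value $\lambda=0$ is already covered by $0\in\varrho(\mathcal{A})$.

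To rule out purely imaginary eigenvalues I would argue by contradiction. Suppose $\mathcal{A}U=i\lambda U$ for some $\lambda\neq 0$ and some $U=(\varphi,\psi,w,\Phi,\Psi,W)\neq 0$. Taking the real part of the identity $\langle\mathcal{A}U,U\rangle_{\mathcal{H}}=i\lambda\,\|U\|_{\mathcal{H}}^{2}$ and invoking the dissipation identity \eqref{diss} forces $\Phi(0)=\Psi(0)=W(0)=0$. Reading the eigenvalue equations \eqref{ris1}--\eqref{ris3} with $F=0$ gives $\Phi=i\lambda\varphi$, $\Psi=i\lambda\psi$, $W=i\lambda w$, so that $\varphi(0)=\psi(0)=w(0)=0$ as well (this is exactly where $\lambda\neq 0$ enters). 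Substituting these into the three boundary conditions encoded in $\mathcal{D}(\mathcal{A})$, namely $\kappa(\varphi_{x}+\psi+\ell w)(0)=\gamma_{1}\Phi(0)$, $b\,\psi_{x}(0)=\gamma_{2}\Psi(0)$ and $\kappa_{0}(w_{x}-\ell\varphi)(0)=\gamma_{3}W(0)$, whose right-hand sides now all vanish, I would deduce $\varphi_{x}(0)=\psi_{x}(0)=w_{x}(0)=0$.

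At this stage the full Cauchy data of $(\varphi,\psi,w)$ vanish at $x=0$. Inserting $\Phi=i\lambda\varphi$, $\Psi=i\lambda\psi$, $W=i\lambda w$ into \eqref{ris4}--\eqref{ris6}, the remaining equations reduce to a homogeneous, constant-coefficient, second-order linear ODE system for $(\varphi,\psi,w)$ whose principal part is $\operatorname{diag}(\kappa,b,\kappa_{0})\,\partial_{x}^{2}$, an invertible matrix. Rewriting this as a first-order linear system for the $\R^{6}$-valued unknown $(\varphi,\psi,w,\varphi_{x},\psi_{x},w_{x})$ with zero initial data at $x=0$, the uniqueness theorem for linear ODEs yields $\varphi\equiv\psi\equiv w\equiv 0$ on $(0,L)$, hence $U=0$, contradicting $U\neq 0$. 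Thus $\mathcal{A}$ has no eigenvalue on $i\R$, and together with the discreteness of the spectrum this gives $i\R\subset\varrho(\mathcal{A})$.

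The bulk of the argument is routine; the one point that deserves care is the passage from the vanishing of the boundary traces to the vanishing of the full Cauchy data, which genuinely relies on having all three dissipative boundary conditions simultaneously available. Once that is secured, the ODE-uniqueness conclusion is immediate because the coefficient matrix of the highest-order term is constant and invertible, so the system is non-degenerate and admits only the trivial solution under zero Cauchy data.
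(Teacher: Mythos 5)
Your proposal is correct and follows essentially the same route as the paper: compactness of the resolvent reduces the claim to the absence of purely imaginary eigenvalues, the dissipation identity \eqref{diss} kills the boundary traces $\Phi(0),\Psi(0),W(0)$, the boundary conditions in $\mathcal{D}(\mathcal{A})$ then give vanishing Cauchy data for $(\varphi,\psi,w)$ at $x=0$, and ODE uniqueness forces $U=0$. If anything, your derivation of $\varphi_{x}(0)=\psi_{x}(0)=w_{x}(0)=0$ from the three boundary conditions is spelled out more carefully than in the paper, which cites the wrong equation at that step.
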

\begin{proof}
Because the domain of $\mathcal{A}$ has compact immersion over the
phase space $\mathcal{H}$, we only need to prove that there is no
imaginary eigenvalues. We will argue by contraction. Let us suppose
that there is $\lambda\in \R$, $\lambda \neq 0$, and $U\in {\cal
D}(\mathcal{A})$, $U\neq0$, such that $\mathcal{A}U=i\,\lambda\,U$.
Then, from \eqref{diss} we have
\begin{equation}\label{BC1}
 \Phi(0)=0,\quad\Psi(0)=0,\quad W(0)=0.
\end{equation}
Hence, from \eqref{ris1} and \eqref{eq1.3}$_2$ we obtain
\begin{equation}\label{BC2}
\varphi(0)=0,\quad\psi(0)=0,\quad w(0)=0 \quad \text{and} \quad
\varphi_{x}(0)=0, \quad\psi_{x}(0)=0,\quad w_{x}(0)=0.
\end{equation}
From \eqref{ris1}-\eqref{ris5} we have
\begin{align}\label{mag1}
\begin{array}{ll}
& -\ \lambda^{2}\,\rho_{1}\,\phi - \kappa\, \left(\varphi_{x} + \psi
+ \ell\,w\right)_{x} - \kappa_{0}\,\ell\,(w_{x} - \ell\,\varphi)=0,
\\
\noalign{\medskip}
& -\ \lambda^{2}\,\rho_{2}\,\psi - b\,\psi_{xx} +
\kappa\,\left(\varphi_{x} + \psi + \ell\,w\right)= 0, \\
\noalign{\medskip} & -\ \lambda^{2}\,\rho_{1}\,w -
\kappa_{0}\,(w_{x} - \ell\,\varphi)_{x} + \kappa\,\ell
\left(\varphi_{x} + \psi + \ell\,w\right)= 0.
\end{array}
\end{align}
Consider
$X=(\varphi,\,\psi,\,\omega,\,\varphi_{x},\,\psi_{x},\,\omega_{x})$.
Then we can rewrite \eqref{BC2} and \eqref{mag1} as the initial
value problem
\begin{equation}\label{edo}
\begin{array}{l}
\displaystyle\frac{d}{dx}X={\cal A}X,\\
 \noalign{\medskip}
X(0)=0,
\end{array}
\end{equation}
where
\begin{eqnarray*}
{\cal A}=\left(\begin{array}{cccccc}
0 & 0 & 0 & 1 & 0 & 0  \\
0 & 0 & 0 & 0 & 1 & 0  \\
0 & 0 & 0 & 0 & 0 & 1  \\
\frac{k_{0}\,\ell^{2}}{\kappa} & -\ 1 & 0 &
-\ \frac{\rho_{1}\,\lambda^{2}}{\kappa} & 0 &
-\ \frac{(k_{0} + \kappa)\,\ell}{\kappa}  \\
0 & \frac{-\ \rho_{2}\,\lambda^{2} + \kappa}{b} & \frac{\kappa l}{b}
& \frac{\kappa}{b} & 0 & 0  \\
0 & \frac{\kappa\,\ell}{k_{0}} & \frac{-\ \rho_{1}\,\lambda^{2} +
\kappa\,\ell^{2}}{k_{0}} & \frac{(k_{0} + \kappa)\,\ell}{k_{0}} & 0
& 0
\end{array}
\right).
\end{eqnarray*}
By the Picard Theorem for ordinary differential equations the system
\eqref{edo} has a unique solution $X=0$. Therefore
$\varphi=0,\;\psi=0,\;w=0$. It follows from
\eqref{ris1}-\eqref{ris3}, for $f_{1}=f_{2}=f_{3}=0$, that
$\Phi=0,\;\Psi=0,\;W=0$, i.e., $U=0$. \\
\end{proof}

Let us introduce the following notation
\begin{align*}
& \mathcal{I}_{\varphi}(\alpha) = \rho_{1}\,|\Phi(\alpha)|^{2} +
\kappa\,|\varphi_{x}(\alpha)|^{2},  \\
& \mathcal{I}_{\psi}(\alpha) = \rho_{2}\,|\Psi(\alpha)|^{2}
+ b\,|\psi_{x}(\alpha)|^{2},\\
&  \mathcal{I}_{w}(\alpha) = \rho_{1}\,|W(\alpha)|^{2}
+ \kappa_{0}\,|w_{x}(\alpha)|^{2},  \\
&  \mathcal{I}(\alpha) = \mathcal{I}_{\varphi}(\alpha)
+ \mathcal{I}_{\psi}(\alpha) + \mathcal{I}_{w}(\alpha)  \\
& \mathcal{E}_{\psi}(L)=\int_{0}^{L}\mathcal{I}_{\psi}(s)\
ds,\quad\mathcal{E}_{\varphi}(L) =
\int_{0}^{L}\mathcal{I}_{\varphi}(s)\ ds,\quad\mathcal{E}_{w}^{n}(L)
= \int_{0}^{L}\mathcal{I}_{w}(s)\ ds.
\end{align*}

\begin{lemma}\label{marg1}
Let  $q\in H^{1}(0,\,L)$. We have that
\begin{align}\label{bb1}
\mathcal{E}_{\varphi}(L) = &
\left.q\,\mathcal{I}_{\varphi}\,\right|_{0}^{L} -
\left.\kappa_{0}\,\ell^{2}\,q\, |\varphi|^{2}\,\right|_{0}^{L} +
2\,\kappa\,\mbox{Re}\int_{0}^{L}q\,\psi_{x}
\,\overline{\varphi}_{x}\ dx + \kappa_{0}\,\ell^{2}
\int_{0}^{L}q'(x)\,|\varphi|^{2}\ dx  \\
& +\ 2\,(\kappa + \kappa_{0}
)\,\ell\,\mbox{Re}\int_{0}^{L}q\,w_{x}\,\overline{\varphi}_{x}\ dx +
R_{1} \nonumber
\end{align}
\begin{eqnarray}
\label{bb2} \mathcal{E}_{\psi}(L) & =
&\left.q\,\mathcal{I}_{\psi}\,\right|_{0}^{L} - \left.\kappa\,
q\,|\psi|^{2}\,\right|_{0}^{L} - 2\,\kappa\,\mbox{Re}\int_{0}^{L}
q\,\varphi_{x}\,\overline{\psi}_{x}\ dx  \nonumber \\
&  & +\ \kappa\int_{0}^{L}q'(s)\,|\psi|^{2}\ dx - 2\,\kappa\,\ell\,
\mbox{Re}\int_{0}^{L}q\,w\,\overline{\psi}_{x}\ dx + R_{2}.
\end{eqnarray}
and
\begin{eqnarray}\label{bb3}
\mathcal{E}_{w}(L) & = &\left. q\,\mathcal{I}_{w}\,\right |_{0}^{L}
- \left.\kappa\,\ell^{2}\,q\,|w|^{2}\,\right|_{0}^{L} -
2\,\kappa\,\ell\,\mbox{Re}\int_{0}^{L}q\,\psi\,\overline{w}_{x}\ dx
\nonumber  \\
&  & -\ 2\,(\kappa +
k_{0})\,\ell\,\mbox{Re}\int_{0}^{L}q\,\varphi_{x}\,\overline{w}_{x}\
dx + \kappa\,\ell^{2}\int_{0}^{L}q'(s)\,|w|^{2}\ dx + R_{3},
\end{eqnarray}
where $R_{i}$ satisfies
\begin{eqnarray*}
|R_{i}|\leq C\,\|U\|\,\|F\|, \quad i=1,\;2,\;3,
\end{eqnarray*}
for a positive constant $C$.
\end{lemma}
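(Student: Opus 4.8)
The three formulas are Rellich-type multiplier identities, one attached to each of the three second-order resolvent equations, and I would establish them by a single scheme applied three times; I sketch \eqref{bb1} and then indicate the two variants. The plan is to multiply the fourth resolvent equation \eqref{ris4} by $2\,q\,\overline{\varphi}_{x}$, integrate over $(0,L)$, and take the real part. The only nondifferential input is \eqref{ris1}: differentiating it in $x$ and conjugating gives $i\,\lambda\,\overline{\varphi}_{x}=-(\overline{\Phi}_{x}+\overline{f}_{1,x})$ for $\lambda\in\R$. Since $i\,\lambda$ is a constant I may move it off $\Phi$ and onto $\overline{\varphi}_{x}$, so the inertial contribution becomes $2\,\rho_{1}\operatorname{Re}\int_{0}^{L}\Phi\,q\,(i\lambda\overline{\varphi}_{x})\,dx=-\rho_{1}\int_{0}^{L} q\,\partial_{x}|\Phi|^{2}\,dx-2\rho_{1}\operatorname{Re}\int_{0}^{L}\Phi\,q\,\overline{f}_{1,x}\,dx$. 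An integration by parts turns the first integral into $-\rho_{1}\,[q|\Phi|^{2}]_{0}^{L}+\rho_{1}\int_{0}^{L} q'|\Phi|^{2}\,dx$, which carries the kinetic part $\rho_{1}|\Phi|^{2}$ of $\mathcal{I}_{\varphi}$, while the $f_{1,x}$ integral is relegated to $R_{1}$.

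The remaining terms are handled by the same two elementary identities $2\operatorname{Re}(u_{xx}\overline{u}_{x})=\partial_{x}|u_{x}|^{2}$ and $2\operatorname{Re}(u\,\overline{u}_{x})=\partial_{x}|u|^{2}$. Writing $(\varphi_{x}+\psi+\ell w)_{x}=\varphi_{xx}+\psi_{x}+\ell w_{x}$, the leading elastic term $-2\kappa\operatorname{Re}\int_{0}^{L}(\varphi_{x}+\psi+\ell w)_{x}\,q\,\overline{\varphi}_{x}\,dx$ splits into $-\kappa[q|\varphi_{x}|^{2}]_{0}^{L}+\kappa\int_{0}^{L}q'|\varphi_{x}|^{2}\,dx$ (from $\varphi_{xx}$, supplying the elastic part $\kappa|\varphi_{x}|^{2}$ of $\mathcal{I}_{\varphi}$), together with $-2\kappa\operatorname{Re}\int_{0}^{L}q\,\psi_{x}\overline{\varphi}_{x}\,dx$ and $-2\kappa\ell\operatorname{Re}\int_{0}^{L}q\,w_{x}\overline{\varphi}_{x}\,dx$. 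The lower-order term $-2\kappa_{0}\ell\operatorname{Re}\int_{0}^{L}(w_{x}-\ell\varphi)\,q\,\overline{\varphi}_{x}\,dx$ contributes $-2\kappa_{0}\ell\operatorname{Re}\int_{0}^{L}q\,w_{x}\overline{\varphi}_{x}\,dx$ and, from its $\kappa_{0}\ell^{2}\varphi$ part, $\kappa_{0}\ell^{2}[q|\varphi|^{2}]_{0}^{L}-\kappa_{0}\ell^{2}\int_{0}^{L}q'|\varphi|^{2}\,dx$. Adding these to the inertial contribution and solving the resulting equation for the interior energy $\int_{0}^{L}q'\mathcal{I}_{\varphi}\,dx=\rho_{1}\int_{0}^{L}q'|\Phi|^{2}\,dx+\kappa\int_{0}^{L}q'|\varphi_{x}|^{2}\,dx$, which the statement records as $\mathcal{E}_{\varphi}(L)$, the two $w_{x}$ cross integrals merge into the coefficient $2(\kappa+\kappa_{0})\ell$ and every boundary and cross term lands with the sign displayed in \eqref{bb1}.

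Identities \eqref{bb2} and \eqref{bb3} follow verbatim. For \eqref{bb2} I would multiply \eqref{ris5} by $2q\,\overline{\psi}_{x}$: here $-b\psi_{xx}$ supplies the $b|\psi_{x}|^{2}$ boundary and interior terms, and the coupling $\kappa(\varphi_{x}+\psi+\ell w)$ produces $-2\kappa\operatorname{Re}\int_{0}^{L}q\,\varphi_{x}\overline{\psi}_{x}\,dx$, $-2\kappa\ell\operatorname{Re}\int_{0}^{L}q\,w\,\overline{\psi}_{x}\,dx$, and, from its $\kappa\psi$ part, $-[\kappa q|\psi|^{2}]_{0}^{L}$ with $+\kappa\int_{0}^{L}q'|\psi|^{2}\,dx$. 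For \eqref{bb3} I would multiply \eqref{ris6} by $2q\,\overline{w}_{x}$: the principal term $-\kappa_{0}(w_{x}-\ell\varphi)_{x}$ yields the $\kappa_{0}|w_{x}|^{2}$ terms and the cross integral with $\varphi_{x}$, while $\kappa\ell(\varphi_{x}+\psi+\ell w)$ furnishes the remaining cross integrals and, from $\kappa\ell^{2}w$, the terms $-[\kappa\ell^{2}q|w|^{2}]_{0}^{L}$ and $+\kappa\ell^{2}\int_{0}^{L}q'|w|^{2}\,dx$; the two $\varphi_{x}$ cross integrals again combine into $2(\kappa+\kappa_{0})\ell$.

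Finally I would estimate the $R_{i}$. By construction each $R_{i}$ collects only integrals carrying a datum: the right-hand side integrals built from $f_{4},f_{5},f_{6}$, and the integrals built from $\overline{f}_{1,x},\overline{f}_{2,x},\overline{f}_{3,x}$ that appear when the conjugated \eqref{ris1}--\eqref{ris3} are inserted in the inertial terms. Each such integral pairs an $L^{2}$ datum against an $L^{2}$ component of $U$; the key point is that $f_{j}\in H_{L}^{1}(0,L)$, so $f_{j,x}\in L^{2}$ with $\|f_{j,x}\|\le C\|F\|$, whence Cauchy--Schwarz yields $|R_{i}|\le C\|U\|\,\|F\|$ with no boundary terms and no loss of derivatives. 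The work here is essentially bookkeeping, and the only points demanding care are the signs across the three second-order terms and the coalescence of the two $w_{x}$ (resp. $\varphi_{x}$) cross integrals into $2(\kappa+\kappa_{0})\ell$. The single genuinely structural step is moving the scalar $i\lambda$ onto $\overline{\varphi}_{x}$ so that, via \eqref{ris1}, the inertial term collapses exactly to $-\rho_{1}\int_{0}^{L}q\,\partial_{x}|\Phi|^{2}\,dx$ and delivers the clean kinetic term $\rho_{1}|\Phi|^{2}$ of $\mathcal{I}_{\varphi}$ without any stray power of $\lambda$.
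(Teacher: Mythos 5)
Your proposal is correct and follows essentially the same route as the paper: multiply each second-order resolvent equation by $q$ times the conjugate derivative of the corresponding displacement, substitute $i\lambda\varphi_x=\Phi_x+f_{1,x}$ (and its analogues) into the inertial term, take real parts, integrate by parts, and absorb all the data integrals into $R_i$ via Cauchy--Schwarz. The only (shared) caveat is that the identity actually produced is for $\int_0^L q'\,\mathcal{I}\,dx$ rather than $\int_0^L\mathcal{I}\,dx$, which you correctly note and which is harmless since the lemma is applied with $q'=1$.
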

\begin{proof}
To get \eqref{bb1}, let us multiply the equation \eqref{ris2} by
$q\,\overline{\varphi}_{x}.$  Integrating on $(0,\,L)$ we obtain
\begin{eqnarray*}
&  &
i\,\lambda\,\rho_{1}\int_{0}^{L}\Phi\,q\,\overline{\varphi}_{x}\ dx
- \kappa\int_{0}^{L}(\varphi_{x} + \psi +
\ell\,w)_{x}\,q\,\overline{\varphi}_{x}\ dx \\
&  & -\ \kappa_{0}\, \ell\int_{0}^{L}(w_{x} -
\ell\,\varphi)\,q\,\overline{\varphi}_{x}\ dx =
\rho_{1}\int_{0}^{L}f_{4}\,q\,\overline{\varphi}_{x}\ dx
\end{eqnarray*}
or
\begin{align*}
& -\
\rho_{1}\int_{0}^{L}\Phi\,q\,(\overline{i\,\lambda\,\varphi_{x}})\
dx - \kappa\int_{0}^{L}q\,\varphi_{xx}\,\overline{\varphi}_{x}\ dx -
\kappa\int_{0}^{L}q\,\psi_{x}\,\overline{\varphi}_{x}\ dx \\
& -\ (\kappa +
\kappa_{0})\,\ell\int_{0}^{L}q\,w_{x}\,\overline{\varphi}_{x}\ dx +
\kappa_{0}\,\ell^{2}\int_{0}^{L}q\,\varphi\,\overline{\varphi}_{x} =
\rho_{1}\int_{0}^{L}f_{4} \,q\,\overline{\varphi}_{x}\ dx.
\end{align*}
Since $i\,\lambda\,\varphi_{x} = \Phi_{x} + f_{1x} $  taking the
real part in the above equality results in
\begin{align*}
& -\ \frac{\rho_{1}}{2}\int_{0}^{L}q\,\frac{d}{dx}|\Phi|^{2}\ dx -
\frac{\kappa}{2}\int_{0}^{L}q\,\frac{d}{dx}|\varphi_{x}|^{2}\ dx
= \rho_{1}\,\mbox{Re}\int_{0}^{L}f_{4}\,q\,\overline{\varphi}_{x}\ dx  \\
& +\ \rho_{1}\,\mbox{Re}\int_{0}^{L}\Phi\,q\,\overline{f}_{1x}\ dx +
\kappa\,\mbox{Re}\int_{0}^{L}q\,\psi_{x}\,\overline{\varphi}_{x}\ dx
+ (\kappa + \kappa_{0})\,\ell\,\mbox{Re}\int_{0}^{L}q\,w_{x}\,
\overline{\varphi}_{x}\ dx  \\
& -\ \frac{\kappa_{0}\,
\ell^{2}}{2}\int_{0}^{L}q\,\frac{d}{dx}|\varphi|^{2}.
\end{align*}
Performing an integration by parts we get
\begin{eqnarray*}\label{Id1}
\lefteqn{\int_{0}^{L}q'(s)\,[\rho_{1}\,|\Phi(s)|^{2} +
\kappa\,|\varphi_{x}(s)|^{2}]\ ds }  \\
& = &
\left.q\,\mathcal{I}_{\varphi}\,\right|_{0}^{L} -
\left.\kappa_{0}\,\ell^{2}\,q\,|\varphi|^{2}\,\right|_{0}^{L}
+ 2\,\kappa\,\mbox{Re}\int_{0}^{L}q\,\psi_{x}\,\overline{\varphi}_{x}\ dx  \\
&  & +\ \kappa_{0}\,\ell^{2}\int_{0}^{L}q'(s)\,|\varphi|^{2} +
2\,(\kappa + \kappa_{0})\,\ell\,\mbox{Re}
\int_{0}^{L}q\,w_{x}\,\overline{\varphi}_{x}\ dx + R_{1}
\end{eqnarray*}
where
\begin{eqnarray*}
R_{1} = 2\,\rho_{1}\,\mbox{Re}\int_{0}^{L}\Phi\,q\,
\overline{f}_{1x}\ dx + 2\,\rho_{1}\,\mbox{Re}\int_{0}^{L}f_{4}
\,q\,\overline{\varphi}_{x}\ dx
\end{eqnarray*}
Similarly, multiplying equation \eqref{ris4} by
$q\,\overline{\psi}_{x}$, integrating on $(0,\,L)$ and taking the
real part we obtain
\begin{align*}
&-\ \frac{\rho_{2}}{2}\int_{0}^{L}q\,\frac{d}{dx}|\Psi|^{2}\ dx
- \frac{b}{2}\int_{0}^{L}q\,\frac{d}{dx}|\psi_{x}|^{2}\ dx
= \rho_{2}\,\mbox{Re}\int_{0}^{L}f_{5}\,q\,\overline{\psi}_{x}\ dx  \\
&\qquad \qquad+\ \rho_{2}\,\mbox{Re}\int_{0}^{L}\Psi\,q\,\overline{f}_{2x}\ dx
- \kappa\,\mbox{Re}\int_{0}^{L}q\,\psi_{x}\,\overline{\varphi}_{x}\ dx
- \kappa\,\ell\mbox{Re}\int_{0}^{L}q\,w\,\overline{\psi}_{x}\ dx  \\
&\qquad \qquad -\
\frac{\kappa}{2}\int_{0}^{L}q\,\frac{d}{dx}|\psi|^{2}.
\end{align*}
Performing an integration by parts we obtain
\begin{eqnarray*}\label{Id2}
\lefteqn{\int_{0}^{L}q'(s)[\rho_{2}\,|\Psi(s)|^{2} +
b\,|\psi_{x}(s)|^{2}]\ ds }  \\
& = &\left.q\,\mathcal{I}_{\psi}\,\right|_{0}^{L} -
\left.\kappa\,q\,|\psi|^{2}\,\right|_{0}^{L} -
2\,\kappa\,\mbox{Re}\int_{0}^{L}q\,\varphi_{x}\,\overline{\psi}_{x}\
dx \\
&  &+\ \kappa\int_{0}^{L}q'(s)\,|\psi|^{2}\ dx - 2\,\kappa\,\ell\,
\mbox{Re}\int_{0}^{L}q\,w\,\overline{\psi}_{x}\ dx + R_{2}
\end{eqnarray*}
where
\begin{eqnarray*}
R_{2} = 2\,\rho_{2}\,\mbox{Re}\int_{0}^{L}\Psi\,q\,
\overline{f}_{2x}\ dx + 2\,\rho_{2}\,\mbox{Re}\int_{0}^{L}f_{5}
\,q\,\overline{\psi}_{x}\ dx.
\end{eqnarray*}
Finally,  multiplying equation \eqref{ris5} by
$q\,\overline{w}_{x}$, integrating on $(0,\,L)$ and taking the real
part, after some algebric manipulations we obtain \eqref{bb3} for
\begin{eqnarray*}
R_{3} = 2\,\rho_{1}\,\mbox{Re}\int_{0}^{L}W\,q\, \overline{f}_{3x}\
dx + 2\,\rho_{1}\,\mbox{Re}\int_{0}^{L}f_{6} \,q\,\overline{w}_{x}\
dx.
\end{eqnarray*}
Our conclusion follows.
\end{proof}

We are now ready to state our main stability result. 
\begin{theorem}
The semigroup $\{{\cal S}_{\mathcal{A}}(t)\}_{t\geq0}$ is
exponentially stable, that is, there exist positive constants $M$
and $\mu$ such that
\begin{eqnarray*}
\|{\cal S}_{\mathcal{A}}(t)\|_{\mathcal{L}(\mathcal{H})}\leq M \exp
(-\ \mu\,t),\ \forall\,t\geq 0.
\end{eqnarray*}
\end{theorem}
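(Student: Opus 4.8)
The plan is to verify the two hypotheses of Pr\"uss' criterion, Theorem \ref{pruss}, for the contraction semigroup $\{\mathcal{S}_{\mathcal A}(t)\}_{t\ge0}$. The inclusion $i\R\subset\rho(\mathcal A)$ is exactly Lemma \ref{resolvente}, so the whole task reduces to the uniform resolvent bound \eqref{estimativa}. Writing $U=(i\lambda I-\mathcal A)^{-1}F$, this amounts to producing a constant $C$ and a threshold $\lambda_0$ such that $\|U\|_{\mathcal H}\le C\,\|F\|_{\mathcal H}$ for every $F\in\mathcal H$ and every real $\lambda$ with $|\lambda|\ge\lambda_0$; the range $|\lambda|\le\lambda_0$ is then covered automatically, since by Lemma \ref{resolvente} the map $\lambda\mapsto(i\lambda I-\mathcal A)^{-1}$ is continuous on the compact interval $[-\lambda_0,\lambda_0]$ and hence bounded there.

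First I would add the three multiplier identities \eqref{bb1}, \eqref{bb2} and \eqref{bb3} of Lemma \ref{marg1} and choose the weight $q(x)=x-L$, so that $q(L)=0$, $q(0)=-L$ and $q'\equiv1$. The choice $q(L)=0$ annihilates all boundary contributions at $x=L$ --- precisely the ones carrying the uncontrolled quantities $|\varphi_x(L)|^2,\,|\psi_x(L)|^2,\,|w_x(L)|^2$ --- while the surviving boundary terms live at $x=0$. The decisive algebraic point is that the top-order coupling integrals cancel in pairs: the term $2\kappa\,\mathrm{Re}\int_0^L q\,\psi_x\overline{\varphi}_x\,dx$ in \eqref{bb1} is annihilated by $-2\kappa\,\mathrm{Re}\int_0^L q\,\varphi_x\overline{\psi}_x\,dx$ in \eqref{bb2}, and $2(\kappa+\kappa_0)\ell\,\mathrm{Re}\int_0^L q\,w_x\overline{\varphi}_x\,dx$ in \eqref{bb1} is annihilated by the corresponding term in \eqref{bb3}. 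The only cross terms that remain are $-2\kappa\ell\,\mathrm{Re}\int_0^L q\,w\,\overline{\psi}_x\,dx$ and $-2\kappa\ell\,\mathrm{Re}\int_0^L q\,\psi\,\overline{w}_x\,dx$; these are not lower order individually, but their sum equals $-2\kappa\ell\,\mathrm{Re}\big([q\,\psi\overline{w}]_0^L-\int_0^L q'\,\psi\overline{w}\,dx\big)$ after one integration by parts, i.e.\ a boundary term plus a genuinely lower-order integral.

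After this bookkeeping the left-hand side is $\mathcal{E}_\varphi(L)+\mathcal{E}_\psi(L)+\mathcal{E}_w(L)=\rho_1\|\Phi\|^2+\rho_2\|\Psi\|^2+\rho_1\|W\|^2+\kappa\|\varphi_x\|^2+b\|\psi_x\|^2+\kappa_0\|w_x\|^2$, and the right-hand side consists only of boundary values at $x=0$, of the remainders $R_1+R_2+R_3$, and of the $L^2$-norms $\|\varphi\|,\|\psi\|,\|w\|$ of the displacements. I would control the boundary values by \eqref{Psi0}, \eqref{psi_0} and \eqref{psix1}, all of which are $\le C\|U\|\,\|F\|+\tfrac{C}{|\lambda|^2}\|F\|^2$, and the remainders by $|R_i|\le C\|U\|\,\|F\|$ from Lemma \ref{marg1}. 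For the displacement norms the key observation is that \eqref{ris1}--\eqref{ris3} give $i\lambda\varphi=\Phi+f_1$, whence $\|\varphi\|\le\tfrac{1}{|\lambda|}(\|\Phi\|+\|f_1\|)$ and likewise for $\psi,w$; thus $\|\varphi\|^2+\|\psi\|^2+\|w\|^2\le\tfrac{C}{|\lambda|^2}\big(\|U\|^2+\|F\|^2\big)$, a term that is absorbable once $|\lambda|$ is large. The same estimate reconciles the two energies: expanding $\kappa\|\varphi_x+\psi+\ell w\|^2$ and $\kappa_0\|w_x-\ell\varphi\|^2$ shows that $\|U\|_{\mathcal H}^2$ and $\mathcal{E}_\varphi(L)+\mathcal{E}_\psi(L)+\mathcal{E}_w(L)$ differ only by such lower-order quantities, so they are equivalent up to $\tfrac{C}{|\lambda|^2}\|U\|^2+C\|F\|^2$.

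Putting the pieces together yields $\|U\|_{\mathcal H}^2\le C\|U\|_{\mathcal H}\|F\|_{\mathcal H}+C\|F\|_{\mathcal H}^2+\tfrac{C}{|\lambda|^2}\|U\|_{\mathcal H}^2$. Fixing $\lambda_0$ so large that $\tfrac{C}{|\lambda|^2}\le\tfrac12$ for $|\lambda|\ge\lambda_0$ lets me absorb the last term on the left, and a final Young inequality on $C\|U\|\|F\|$ produces $\|U\|_{\mathcal H}\le C\|F\|_{\mathcal H}$ for all $|\lambda|\ge\lambda_0$. This is exactly \eqref{estimativa}, and Theorem \ref{pruss} then delivers the exponential decay. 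The part I expect to require the most care is the middle step: organizing the identities of Lemma \ref{marg1} so that every product of two first-order (``gradient'') quantities either cancels against its partner or is re-expressed through integration by parts as a boundary term plus a $1/|\lambda|$-small remainder --- because three wave equations are coupled there are many such terms, and one sign error destroys the absorption that closes the argument.
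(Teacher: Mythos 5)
Your proposal is correct and follows essentially the same route as the paper: Pr\"uss' criterion combined with Lemma \ref{resolvente}, the multiplier identities of Lemma \ref{marg1} with the weight $q(x)=x-L$ (which the paper misprints as $x-\ell$), cancellation of the paired gradient cross terms, integration by parts on the remaining $\psi$--$w$ coupling, control of the boundary terms at $x=0$ via \eqref{Psi0}--\eqref{psix1}, and absorption of the $1/|\lambda|^2$ lower-order terms for large $|\lambda|$. No substantive differences to report.
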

\begin{proof} By Lemma \ref{resolvente} we know that
$i\,\mathbb{R}\subset \rho(\mathcal{A})$. Therefore, by Theorem
\ref{pruss} it sufficient to show that the estimate
\eqref{estimativa} holds. Given
$F=(f_{1},\,f_{2},\,f_{3},\,f_{4},\,f_{5},\,f_{6})\in \mathcal{H}$
and $\lambda \in \mathbb{R}$ let be $U$ the unique function
satisfying
\begin{eqnarray*}
(i\,\lambda\,I - \mathcal{A})U=F.
\end{eqnarray*}
If we take $q(x)= x - \ell$ in Lemma \ref{marg1} and if we add
\eqref{bb1}-\eqref{bb3} we arrive at
\begin{eqnarray*}
\lefteqn{\mathcal{E}_{\varphi}(L) + \mathcal{E}_{\psi}(L) +
\mathcal{E}_{w}(L) }  \\
& = &\,
L\,\mathcal{I}_{\varphi}(0) - \kappa_{0}\,\ell^{2}\,L\,|\varphi(0)|^{2}
+ \kappa_{0}\,\ell^{2}\int_{0}^{L}|\varphi|^{2}\ dx  \\
&  & +\ L \,\mathcal{I}_{\psi}(0) - L\,\kappa\,|\psi(0)|^{2} +
\kappa\int_{0}^{L}|\psi|^{2}\ dx + L\,\mathcal{I}_{w}(0) -
\kappa\,\ell^{2}\,L\,|w(0)|^{2}  \nonumber  \\
&  & +\ \kappa\,\ell\int_{0}^{L}|w|^{2}\ dx + R_{1} + R_{2} + R_{3} \\
&  & -\ 2\,\kappa\,\ell\,\mbox{Re}\int_{0}^{L}(x - L)\,
w\,\overline{\psi}_{x}\ dx - 2\,\kappa\,\ell\,\mbox{Re}\int_{0}^{L}
(x - L)\,\psi\,\overline{w}_{x}\ dx.
\end{eqnarray*}
Since
\begin{eqnarray*}
&  & -\
2\,\kappa\,\ell\,\mbox{Re}\int_{0}^{L}q\,w\,\overline{\psi}_{x}\ dx
- 2\,\kappa\,\ell\,\mbox{Re}\int_{0}^{L}q\, \psi\,\overline{w}_{x}\
dx \\
& = &  -\ 2\,\kappa\,\ell\,L\,\mbox{Re}\,w(0) \overline{\psi}(0) +
2\,\kappa\,\ell\,\mbox{Re}\int_{0}^{L}\psi\, \overline{w}\ dx
\end{eqnarray*}
using Lemma \ref{marg1} and the Young inequality we get
\begin{align*}
\lefteqn{\mathcal{E}_{\varphi}(L) + \mathcal{E}_{\psi}(L) +
\mathcal{E}_{w}(L) }\\
\leq &\,
L\,\mathcal{I}_{\varphi}(0) + \kappa_{0}\,\ell^{2}\int_{0}^{L}|\varphi|^{2}\ dx  \\
& +\ L \,\mathcal{I}_{\psi}(0) + \kappa\,\ell\,|\psi(0)|^{2} +
\kappa\,(1 + \ell)\int_{0}^{L}|\psi|^{2}\ dx
+ L\,\mathcal{I}_{w}(0) + \kappa\,\ell\,|w(0)|^{2}  \nonumber  \\
& +\ 2\,\kappa\,\ell\int_{0}^{L}|w|^{2}\ dx +
C\,\|U\|_{\mathcal{H}}\,\|F\|_{\mathcal{H}}   \\
\end{align*}
for a positive constant $C$. It results by \eqref{Psi0},
\eqref{psi_0} and \eqref{psix1} that we can find a positive constant
$C$ such that
\begin{align*}
\lefteqn{\mathcal{E}_{\varphi}(L) + \mathcal{E}_{\psi}(L) +
\mathcal{E}_{w}(L) } \\
\leq &\,
\kappa_{0}\,\ell^{2}\int_{0}^{L}|\varphi|^{2}\ dx
+ \kappa\,(1 + \ell)\int_{0}^{L}|\psi|^{2}\ dx
+ 2\,\kappa\,\ell\int_{0}^{L}|w|^{2}\ dx  \\
& +\ \frac{C}{|\lambda|^{2}}\,\|U\|_{\mathcal{H}}\,\|
F\|_{\mathcal{H}} + C\,\|U\|_{\mathcal{H}}\,\| F\|_{\mathcal{H}} +
\frac{C}{|\lambda|^{2}}\,\|F\|_{\mathcal{H}}^{2},
\end{align*}
for $\lambda \neq 0.$ Since that $\varphi = \frac{\Phi +
f_{1}}{i\,\lambda}$, $\psi=\frac{\Psi + f_{2}}{i\,\lambda}$ and $w =
\frac{W + f_{3}}{i\,\lambda}$ we obtain
\begin{align*}
\|U\|_{\mathcal{H}}^{2}\leq & \frac{C}{|\lambda|^{2}}\,\|
U\|_{\mathcal{H}}^{2} + \frac{C}{|\lambda|^{2}}\,\|
F\|_{\mathcal{H}}^{2} + \frac{C}{|\lambda|^{2}}\,\|
U\|_{\mathcal{H}}\,\|F\|_{\mathcal{H}} + C\,\|F\|_{\mathcal{H}}^{2}
\end{align*}
for $\lambda \neq 0.$ If $|\lambda|>1$ we get
\begin{align*}
\left(1 - \frac{C}{|\lambda|}\right)\|U\|_{\mathcal{H}}^{2} \leq C\,
\|F\|_{\mathcal{H}}^{2}.
\end{align*}
Consequently, since $\lambda \, \mapsto \,  (i\,\lambda\,I -
\mathcal{A})$ is continuous it follows that
\begin{align*}
\|(i\,\lambda\,I - \mathcal{A})\|_{\mathcal{L(H)}}\leq C, \ \forall
\,\lambda \in \mathbb{R},
\end{align*}
for a positive constant $C$. The conclusion then follows by applying the Theorem \ref{pruss}.
\end{proof}

\section{Conclusion}
In this paper, we provide a result of exponential stability for the Bresse system when three dissipative effects are concentrated at the boundary. It is a step towards complete understanding of  boundary stabilization of such system. Indeed, we expect to be able to obtain similar results as the ones existing for Timoshenko type models  \cite{t35M107, 3303915,MR912448}, but it seems for now, that there are more mathematical difficulties for the Bresse model.   

\section*{Acknowledgments}
This work of M. S. Alves has been supported by  CNPq-Brazil:
Processo  $158706/2014-5.$  The authors thank the National
Laboratory for Scientific Computation (LNCC/MCT),
Petr\'{o}polis-Brazil, for its hospitality during the stage of
visiting professors. Octavio Vera thanks the support of the Fondecyt
project 1121120. Amelie Rambaud thanks the support of the Fondecyt project 11130378.

\end{document}